\numberwithin{equation}{section}
\numberwithin{figure}{section}
\numberwithin{table}{section}
\theoremstyle{plain}
\newtheorem{thm}{\protect\theoremname}[section]
\theoremstyle{plain}
\newtheorem{conjecture}[thm]{\protect\conjecturename}
\theoremstyle{plain}
\newtheorem{prop}[thm]{\protect\propositionname}
\theoremstyle{plain}
\newtheorem{lem}[thm]{\protect\lemmaname}
\theoremstyle{plain}
\newtheorem{cor}[thm]{\protect\corollaryname}
\subjclass[2020]{Primary 05B35}
\providecommand{\conjecturename}{Conjecture}
\providecommand{\corollaryname}{Corollary}
\providecommand{\lemmaname}{Lemma}
\providecommand{\propositionname}{Proposition}
\providecommand{\theoremname}{Theorem}
\begin{document}
\title[Circuit-Cocircuit Intersection Conjecture for Intersection Size $k\le6$]{The Circuit-Cocircuit Intersection Conjecture for Intersection Size
$k\le6$}
\author{Jaeho Shin}
\address{Korea Institute for Advanced Study, 85 Hoegiro, Seoul 02455, South
Korea}
\email{shin@kias.re.kr}
\keywords{circuit-cocircuit intersection conjecture}
\begin{abstract}
Oxley conjectured (1992) that if a matroid has a circuit-cocircuit
intersection of size $k\ge4$, it has a circuit-cocircuit intersection
of size $k-2$. We show that this conjecture holds for $k\le6$.
\end{abstract}

\maketitle

\section{Introduction and Preparations}

We work on finite matroids throughout the paper. Let $M$ be a matroid
on $S=E(M)$ and consider its dual matroid $M^{\ast}$ with $S=E(M^{\ast})$.
A \textbf{cocircuit} of $M$ is a circuit of $M^{\ast}$. If a subset
$X\subseteq S$ is the intersection of a circuit and a cocircuit of
$M$, then $X$ is called a \textbf{circuit-cocircuit intersection}.\smallskip{}

A subset $D\subseteq S$ is a cocircuit of $M$ if and only if $S-D$
is a maximal proper flat of $M$. So, a circuit-cocircuit intersection
is a circuit minus a maximal proper flat, and thus its cardinality
can never be $1$. Oxley conjectured the following.
\begin{conjecture}[Circuit-Cocircuit Intersection Conjecture 1992]
\label{conj:Oxley} If a matroid has a circuit-cocircuit intersection
of size $k\ge4$, it has a circuit-cocircuit intersection of size
$k-2$.
\end{conjecture}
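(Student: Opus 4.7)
The plan is a two-stage argument. First I would make a standard reduction: given a circuit $C$ and a cocircuit $D$ of $M$ with $|C\cap D|=k$, pass to the minor $N:=M/(C\setminus D)\setminus(D\setminus C)$. A direct verification shows that $T:=C\cap D$ is simultaneously a circuit and a cocircuit of $N$. Moreover, any circuit-cocircuit pair of $N$ lifts to a circuit-cocircuit pair of $M$ with the same intersection, because circuits of the contraction $M/X$ arise from circuits $C_{0}$ of $M$ via $C_{0}\mapsto C_{0}\setminus X$, and the disjointness of $C\setminus D$ and $D\setminus C$ prevents the lift from picking up additional intersection. So the problem reduces to the following local statement: if $N$ is a matroid and $T\subseteq E(N)$ is both a circuit and a cocircuit of $N$ with $|T|=k\in\{4,5,6\}$, then $N$ has a circuit-cocircuit intersection of size $k-2$.

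For $k=4$ the proof is clean via the restriction $N|T$. Since $T$ is a circuit of $N$ with rank $|T|-1=3$ and no proper subset of $T$ is a circuit, we have $N|T=U_{3,4}$, whose cocircuits are exactly the $2$-subsets of $T$. Now cocircuits of $N|T$ are precisely the minimal non-empty members of $\{D'\cap T:D'\text{ cocircuit of }N\}$, so each $2$-subset $\{a,b\}\subseteq T$ arises as $D'\cap T$ for some cocircuit $D'$ of $N$. The pair $(T,D')$ then gives a circuit-cocircuit intersection of size $2=k-2$.

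For $k=5$ and $k=6$ the same restriction argument only yields intersection size $2$, whereas we need size $3$ and $4$ respectively. To boost, I would apply the strong cocircuit elimination axiom to $T$ and a short cocircuit $D'$ with $|D'\cap T|=2$: for a common element $e\in D'\cap T$ and a chosen target $f\in T\setminus D'$, elimination produces a cocircuit $D''\subseteq(T\cup D')\setminus\{e\}$ containing $f$. Varying $D'$ over the many $2$-subsets produced by the $k=4$ argument, together with the choice of $e$ and $f$, should yield a cocircuit with $|D''\cap T|=k-2$. For $k=5$ one elimination step should suffice; for $k=6$ a second round, or a combination with the $k=4$ case applied to an intermediate size-$4$ intersection on a further minor, should produce the desired size-$4$ intersection.

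The principal obstacle is the case $k=6$, where the target size $4$ sits strictly inside the a priori allowed intersection sizes $\{2,3,4,5,6\}$. Cocircuit elimination permits a range of output sizes, so isolating exactly $4$ requires delicate bookkeeping. I expect the argument to reduce first to a $3$-connected matroid $N$ (where the cocircuit structure is more rigid), then to perform a detailed case split on the intersection patterns $|D'\cap T|$, possibly invoking the primal circuit elimination axiom applied to the circuit $T$ in conjunction with its dual to rule out the off-target sizes $3$ and $5$.
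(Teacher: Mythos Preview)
Your reduction to a minor $N$ in which $T=C\cap D$ is both a circuit and a cocircuit is correct, and your $k=4$ argument is valid: since $N|_{T}\simeq U_{3,4}$, every $2$-subset of $T$ is the trace $D'\cap T$ of some cocircuit $D'$ of $N$, and then $(T,D')$ gives an intersection of size $2$. The paper does the dual thing (it exhibits a \emph{circuit} meeting the cocircuit $X$ in two points), so here you are fine.

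For $k=6$ you are making the problem harder than it is. Oxley (1984) already proved that any matroid with a circuit--cocircuit intersection of size $\ge 4$ has one of size exactly $4$; the paper simply quotes this, so $k=6$ is immediate. Your proposed two-round elimination scheme is unnecessary.

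For $k=5$ there is a genuine gap. Strong cocircuit elimination on $T$ and a short cocircuit $D'$ with $|D'\cap T|=2$ produces a cocircuit $D''\subseteq (T\cup D')\setminus\{e\}$ containing your chosen $f$, but this only pins down $|D''\cap T|\in\{2,3,4\}$; nothing in your setup rules out $2$ or $4$, and ``varying $D'$'' does not help without further structure, because your minor $N=M/(C\setminus D)\setminus(D\setminus C)$ has no control on $E(N)\setminus T$ at all. The paper avoids this by invoking a sharper reduction (again Oxley 1984): there is a minor $N$ with $|E(N)|=2k-2$, rank and corank both $k-1$, in which $X$ is a spanning circuit and a spanning cocircuit, $N|_{X}\simeq U_{k}^{k-1}$, and $N|_{E(N)\setminus X}\simeq U_{k-2}^{k-2}$. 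With only three points outside $X$ and this rigid structure, the paper proves a lemma governing the families $\mathcal{C}_{e}=\{C\in\mathcal{C}(N):C\setminus X=\{e\}\}$ (any two members cover $X\cup\{e\}$, etc.), and then runs a short case analysis to locate a circuit $C$ of size $5$ with $|C\cap Y|=1$ meeting a suitable size-$5$ cocircuit in exactly three points. Your elimination idea could perhaps be made to work, but as written it does not; the missing ingredient is precisely this size-controlled minor and the combinatorics of $\mathcal{C}_{e}$ that it enables.
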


Kingan and Lemos \cite{KL06} showed that the conjecture holds if
the underlying matroid is regular. In this paper, we show that the
conjecture holds for all circuit-cocircuit intersections of size $k\le6$
in an arbitrary matroid. Our proof is based on the following theorems
from \cite{Oxl84}.
\begin{prop}[Oxley 1984]
\label{prop:Oxley} Let $M$ be any matroid.
\begin{enumerate}
\item \label{enu:Oxley1}Every circuit-cocircuit intersection of a minor
of $M$ is a circuit-cocircuit intersection of $M$.
\item \label{enu:Oxley2}Let $X$ be a circuit-cocircuit intersection of
size $k\ge4$. Then, 
\begin{enumerate}
\item \label{enu:Oxley2-1}$M$ has a circuit-cocircuit intersection of
size $4$.
\item \label{enu:Oxley2-2}$M$ has a minor $N$ whose rank and dual rank
are $k-1$ in which $X$ is both a circuit and a cocircuit, and $E(N)-X$
is a rank-$\left(k-2\right)$ flat. Moreover, $N$ is a simple matroid
with $N|_{X}\simeq U_{k-1,k}$ and $N|_{E(N)-X}\simeq U_{k-2,k-2}$
where $U_{r,n}$ denotes the uniform matroid of rank $r$ on $\left\{ 1,\dots,n\right\} $.
\end{enumerate}
\end{enumerate}
\end{prop}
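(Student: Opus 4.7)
The plan is to prove the three parts of Proposition~\ref{prop:Oxley} in turn, using minor-duality for part~(1), a direct minor construction for part~(2b), and a refined analysis of circuits in the resulting minor for part~(2a).

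For part~(1), I would use the standard correspondence: every circuit of a minor $N=M/T\setminus S$ lifts to a circuit $\hat{C}$ of $M$ contained in $C\cup T$ with $\hat{C}\cap S=\emptyset$, and dually every cocircuit of $N$ lifts to a cocircuit $\hat{D}$ of $M$ contained in $D\cup S$ with $\hat{D}\cap T=\emptyset$. Given $(C,D)$ with $C\cap D=X$, the lifts satisfy $\hat{C}\cap\hat{D}\subseteq(E(M)\setminus S)\cap(E(M)\setminus T)=E(M)\setminus(S\cup T)$, on which $\hat{C}$ and $\hat{D}$ restrict to $C$ and $D$ respectively; hence $\hat{C}\cap\hat{D}=C\cap D=X$, so $X$ is a CCI of $M$.

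For part~(2b), I would form $N_{0}:=M/(C\setminus X)\setminus(D\setminus X)$ from a CCI $X=C\cap D$ of size $k$. Since $C\setminus X$ is a proper subset of the circuit $C$, it is independent in $M$, so contracting it turns $X$ into a minimally dependent set, that is, a circuit, of $M/(C\setminus X)$. Dually, deleting $D\setminus X$ makes $X$ a cocircuit. Thus $X$ is both a circuit and a cocircuit of $N_{0}$, and $N_{0}|_{X}\simeq U_{k}^{k-1}$ because every proper subset of $X$ is independent in $N_{0}$. To match the remaining hypotheses---simplicity, rank and dual rank exactly $k-1$, and $N|_{E(N)\setminus X}\simeq U_{k-2}^{k-2}$---I would iteratively refine $N_{0}$: contract any element of $E(N_{0})\setminus X$ that lies outside $\mathrm{cl}(X)$ to bring the rank down to $k-1$, delete any loops or parallel pairs outside $X$ to restore simplicity, and contract or delete further elements to trim $E(N_{0})\setminus X$ to a $(k-2)$-element independent set spanning the hyperplane $E(N)\setminus X$. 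Since every such operation is confined to $E(N_{0})\setminus X$, a short verification confirms that $X$ remains simultaneously a circuit and a cocircuit at each step.

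For part~(2a), I would work inside the minor $N$ produced by part~(2b) and produce a CCI of size $4$ in $N$, which then lifts to $M$ by part~(1). The natural objects to inspect are the fundamental circuits $C(a,X\setminus\{x\})$ for $a\in E(N)\setminus X$ and $x\in X$: each such circuit, paired with the cocircuit $X$, yields a CCI of size $|C(a,X\setminus\{x\})|-1\in\{2,3,4\}$. One also has access to cocircuits arising from hyperplanes spanned by subsets of $X$ augmented by elements of $E(N)\setminus X$. The main obstacle is producing a pair $(C^{*},D^{*})$ with $|C^{*}\cap D^{*}|=4$ uniformly across all admissible configurations of $N$; when no fundamental circuit has size $5$, one has to combine a circuit-elimination argument with an analysis of these alternative cocircuits to construct a circuit of size at least $5$ whose intersection with the cocircuit $X$ has exactly four elements.
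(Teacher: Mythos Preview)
The paper does not prove Proposition~\ref{prop:Oxley} at all; it is quoted from \cite{Oxl84} and used as a black box, so there is no in-paper argument to compare against. On its own merits, your sketch is uneven across the three parts.

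Part~(\ref{enu:Oxley1}) is correct and standard. (Minor point: you reuse the letter $S$ for the deleted set, clashing with the paper's $S=E(M)$.)

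Part~(\ref{enu:Oxley2-2}) is essentially right but the ``iteratively refine'' step is loose. After forming $N_{0}=M/(C\setminus X)\setminus(D\setminus X)$ and contracting down to rank $k-1$, you must \emph{delete}, not ``contract or delete'', elements of the hyperplane $H=E(N_{0})\setminus X$ down to a basis $B$ of $H$: any further contraction would drop the rank below $k-1$. Moreover, the check that $X$ remains a cocircuit after deleting $f\in H$ is not automatic; it requires $f\in\mathrm{cl}(H\setminus\{f\})$, which is exactly why one fixes a basis $B$ of $H$ first and deletes only elements of $H\setminus B$. Once $E(N)\setminus X=B$, simplicity is free: $X$ and $B$ are each independent, and no $x\in X$ is parallel to any $b\in B$ because $B$ is a flat not containing $x$. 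You should say this rather than ``delete any loops or parallel pairs to restore simplicity''.

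Part~(\ref{enu:Oxley2-1}) is not proved. The assertion that the fundamental-circuit CCI sizes lie in $\{2,3,4\}$ is unjustified for general $k$ (the fundamental circuit $C(a,X\setminus\{x\})$ can have any size from $3$ to $k$, giving CCI sizes from $2$ to $k-1$). More importantly, you correctly identify the hard case---when no such circuit yields size $4$---and then stop at ``one has to combine a circuit-elimination argument with an analysis of these alternative cocircuits''. That sentence \emph{is} the content of Oxley's 1984 proof, and it is not short; as written, (\ref{enu:Oxley2-1}) remains open in your write-up.
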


Proposition \ref{prop:Oxley}(\ref{enu:Oxley2-1}) tells that Conjecture
\ref{conj:Oxley} holds for $k=6$. So, we prove the conjecture for
$k=4,5$.\smallskip{}

Note that if $I$ is an independent set of $M$, for any $e\in S$
satisfying that $I\cup\left\{ e\right\} $ is a dependent set, there
is a unique circuit contained in $I\cup\left\{ e\right\} $ which
contains $e$.

\section{\label{sec:Proof}Proof of the Main Theorem}
\begin{lem}
\label{lem:Ce0}Fix an integer $k\ge4$. Let $N$ be a matroid whose
rank and dual rank are $k-1$ with a size-$k$ circuit $X$ that is
also a cocircuit. Then, $N$ is a simple matroid whose ground set
has size $2k-2$ with $N|_{X}\simeq U_{k-1,k}$ and $N|_{E(N)-X}\simeq U_{k-2,k-2}$.
In particular, every circuit has size $\ge3$. For $y\in E(N)-X$,
let 
\[
\mathcal{C}_{y}:=\left\{ \text{the circuits }C\text{ of }N\text{ with }C-X=\left\{ y\right\} \right\} .
\]
Then, 
\begin{enumerate}
\item \label{enu:Ce0-1}$\left|\mathcal{C}_{y}\right|\ge2$.
\item \label{enu:Ce0-2}Let $C_{1}$ and $C_{2}$ be two circuits of $\mathcal{C}_{y}$,
then $\left|X-C_{1}\cap C_{2}\right|\ge2$ and
\[
C_{1}\cup C_{2}=X\cup\left\{ y\right\} .
\]
\item \label{enu:Ce0-3}If $C$ is a circuit of $\mathcal{C}_{y}$ other
than $C_{1}$ and $C_{2}$, then 
\[
X-C_{1}\cap C_{2}\subseteq C-\left\{ y\right\} .
\]
\item \label{enu:Ce0-4}$\left|\mathcal{C}_{y}\right|=2$ if and only if
$C_{1}\cap C_{2}=\left\{ y\right\} $.
\end{enumerate}
\end{lem}

\begin{proof}
Since the rank and the dual rank of $N$ are $k-1$, we have $\left|E(N)\right|=2k-2$.
Then, $N|_{X}\simeq U_{k-1,k}$ since $X$ is a size-$k$ circuit,
and $E(N)-X$ is a maximal proper flat of size $k-2$ since $X$ is
a size-$k$ cocircuit. Moreover, $E(N)-X$ is an independent set with
$N|_{E(N)-X}\simeq U_{k-2,k-2}$.\smallskip{}

Clearly, $N$ has no loops. If $C$ is a size-$2$ circuit, then it
is not contained in $X$ since $X$ is a circuit of size $k\ge4$,
and is not contained in $E(N)-X$ since $E(N)-X$ is an independent
set. Hence, $\left|C\cap X\right|=1$ and $\left|C-X\right|=1$, but
then $C\cap X$ is a circuit-cocircuit intersection of size $1$,
a contradiction. So, $N$ has no size-$2$ circuits. Thus, $N$ is
a simple matroid, and every circuit has size $\ge3$.\smallskip{}

For any size-$\left(k-1\right)$ subset $B\subset X$ which is a base
of $N$, the union $B\cup\left\{ y\right\} $ is dependent. So, there
exists a unique circuit $C\subseteq B\cup\left\{ y\right\} $ which
contains $y$ with $C-X=\left\{ y\right\} $. Note that every circuit
of $\mathcal{C}_{y}$ arises in this way.\smallskip{}

Choose another base $B'\subset X$, then $\left|X-B'\right|=1$ and
$C\cap B'\neq\emptyset$ since $\left|C\right|\ge3$. For any $x\in C\cap B'$,
there is a circuit $C'$ contained in $C\cup X-\left\{ x\right\} =X\cup\left\{ y\right\} -\left\{ x\right\} $.
Then, $C'\neq C$ must contain $y$ since otherwise the circuit $X$
would strictly contain it, a contradiction. Hence, $C'$ is a circuit
of $\mathcal{C}_{y}$, and $\left|\mathcal{C}_{y}\right|\ge2$. Thus,
(\ref{enu:Ce0-1}) is proved.\smallskip{}

Let $C_{1}$ and $C_{2}$ be two circuits of $\mathcal{C}_{y}$. Then,
$\mathcal{C}_{y}$ has a circuit $C$ that is contained in $C_{1}\cup C_{2}-\left\{ y\right\} $.
Then, $C\subseteq C_{1}\cup C_{2}-\left\{ y\right\} \subseteq X$
and therefore $C=X$ and 
\[
C_{1}\cup C_{2}=X\cup\left\{ y\right\} .
\]
 Further, because $X-C_{1}$ and $X-C_{2}$ are distinct nonempty
subsets of $E(N)$, we have $\left|X-C_{1}\cap C_{2}\right|=\left|\left(X-C_{1}\right)\cup\left(X-C_{2}\right)\right|\ge2$.
Thus, (\ref{enu:Ce0-2}) is proved.\smallskip{}

If $C$ is a circuit of $\mathcal{C}_{y}$ other than $C_{1}$ and
$C_{2}$, then $C\cup C_{1}$ contains $X$ by (\ref{enu:Ce0-2})
and $C$ contains $X-C_{1}$. In the same manner, $C$ contains $X-C_{2}$.
Therefore, $C$ contains $\left(X-C_{1}\right)\cup\left(X-C_{2}\right)=X-C_{1}\cap C_{2}$.
Thus, (\ref{enu:Ce0-3}) is proved.\smallskip{}

To prove (\ref{enu:Ce0-4}), suppose that $C_{1}\cap C_{2}=\left\{ y\right\} $.
If $\mathcal{C}_{y}$ has another circuit, say $C$, then $C$ strictly
contains $X-C_{1}\cap C_{2}=X-\left\{ y\right\} =X$ by (\ref{enu:Ce0-3}),
which is a contradiction. Hence, $C_{1}$ and $C_{2}$ are the only
members of $\mathcal{C}_{y}$, and $\left|\mathcal{C}_{y}\right|=2$.

Suppose that $C_{1}\cap C_{2}-\left\{ y\right\} \neq\emptyset$. Then,
for any $x\in C_{1}\cap C_{2}-\left\{ y\right\} \subset X$, there
is a circuit $C$ contained in $C_{1}\cup C_{2}-\left\{ x\right\} =X\cup\left\{ y\right\} -\left\{ x\right\} $,
and so $y\in C$ and $C\in\mathcal{C}_{y}-\left\{ C_{1},C_{2}\right\} $.
Thus, $\left|\mathcal{C}_{y}\right|\ge3$, and (\ref{enu:Ce0-4})
is proved.
\end{proof}
Now, we prove our main theorem.
\begin{thm}
Conjecture \ref{conj:Oxley} holds for $k\le6$.
\end{thm}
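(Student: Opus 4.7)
The case $k=6$ follows directly from Proposition~\ref{prop:Oxley}(\ref{enu:Oxley2-1}), which produces a circuit-cocircuit intersection of size $4=6-2$. For $k\in\{4,5\}$, by Proposition~\ref{prop:Oxley}(\ref{enu:Oxley2-2}) and~(\ref{enu:Oxley1}), it suffices to exhibit a size-$(k-2)$ circuit-cocircuit intersection in the distinguished minor $N$; I keep the notation $Y:=E(N)-X$ from Lemma~\ref{lem:Ce0}.

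For $k=4$, I would observe that $r(N)=3$, so hyperplanes of $N$ are rank-$2$ flats. For any pair $\{x_i,x_j\}\subseteq X$, this pair is independent (since $N|_X\simeq U_4^3$) and no third element of $X$ lies on its rank-$2$ closure $F:=\overline{\{x_i,x_j\}}_N$, so $F$ is a hyperplane with $F\cap X=\{x_i,x_j\}$. Then the circuit $X$ and the cocircuit $E(N)-F$ intersect in $|X|-2=2$ elements, as required.

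For $k=5$ the same idea meets an obstruction: $r(N)=4$, so hyperplanes are rank-$3$ flats, and the rank-$2$ closure of a pair from $X$ is no longer a hyperplane. My plan is to seek either (a) a hyperplane $F$ with $|F\cap X|=2$, which would give $|X\cap(E(N)-F)|=3$, or (b) a rank-$2$ circuit $C=\{x_i,x_j,e\}\in\mathcal{C}_e$ together with a hyperplane $F'$ disjoint from $C$, giving $|C\cap(E(N)-F')|=|C|=3$. For (b), the natural candidate $F':=\overline{X-\{x_i,x_j\}}_N$ avoids $x_i$ and $x_j$ by $U_5^4$; it fails to be disjoint from $C$ precisely when $(X-\{x_i,x_j\})\cup\{e\}$ is itself a circuit in $\mathcal{C}_e$, and by Lemma~\ref{lem:Ce0}(\ref{enu:Ce0-4}) this pins down $\mathcal{C}_e$ to exactly these two circuits. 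In that obstructive sub-case I would repeat the construction at a second $e'\in Y-\{e\}$, using Corollary~\ref{cor:rank2 circuits} (rank-$2$ circuits for $k\ge 5$ are pairwise disjoint or meet in exactly one element) together with Proposition~\ref{prop:Ce1} to locate either an alternative rank-$2$ circuit avoiding the obstruction, or a size-$4$ rank-$3$ circuit $C''$ paired with a hyperplane meeting $C''$ in exactly one element. The residual case, in which $N$ admits no rank-$2$ circuit at all, forces $|\mathcal{C}_e|\ge 3$ for every $e\in Y$ (by Lemma~\ref{lem:Ce0}(\ref{enu:Ce0-4}) combined with the size bounds), and I would handle it by combining this rigid structure with the dual viewpoint, since $N^*$ also satisfies the hypotheses of Proposition~\ref{prop:Oxley}(\ref{enu:Oxley2-2}).

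The main obstacle will be the bookkeeping for the obstructive sub-case in $k=5$: carefully combining Corollary~\ref{cor:rank2 circuits} and Proposition~\ref{prop:Ce1} across distinct elements of $Y$ to rule out every residual configuration in which no size-$3$ circuit-cocircuit intersection appears.
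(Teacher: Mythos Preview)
Your arguments for $k=6$ and $k=4$ are correct. For $k=4$ you take the mirror image of the paper's approach: the paper intersects the cocircuit $X$ with a circuit $C\subseteq Y\cup\{x_1,x_2\}$, while you intersect the circuit $X$ with the cocircuit complementary to the hyperplane $\overline{\{x_i,x_j\}}$. Both are short and valid.

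For $k=5$, however, what you have written is a plan, not a proof. You explicitly leave two branches unresolved: the ``obstructive sub-case'' in which $\mathcal{C}_e=\{\,\{x_i,x_j,e\},\,(X-\{x_i,x_j\})\cup\{e\}\,\}$, and the ``residual case'' in which $N$ has no rank-$2$ circuit at all. The phrases ``I would repeat the construction at a second $e'$'' and ``I would handle it by combining this rigid structure with the dual viewpoint'' are not arguments. In particular, invoking $N^\ast$ buys nothing here: $N^\ast$ satisfies the identical hypotheses of Proposition~\ref{prop:Oxley}(\ref{enu:Oxley2-2}), so the same residual case recurs on the dual side and you have not broken the symmetry. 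Your option (a), a hyperplane $F$ with $|F\cap X|=2$, is never actually pursued. As it stands, the $k=5$ case has a genuine gap.

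The paper's argument for $k=5$ is shorter and avoids the branching you set up. It first disposes of the case where some circuit or cocircuit $C$ with $|C\cap Y|=1$ has size $4$, since then $C\cap X$ (with $X$ playing the role of cocircuit or circuit, respectively) already has size $3$. Otherwise every such $C$ has size $3$ or $5$; one fixes a size-$5$ cocircuit $C_0$ with $C_0\cap Y=\{y_1\}$ and $C_0\cap X=X-\{x_1\}$, and then shows, using only Lemma~\ref{lem:Ce0}, that $\mathcal{C}_{y_2}$ contains a size-$5$ circuit through $x_1$. That circuit meets $C_0$ in exactly $3$ points. No hyperplane computations, no appeal to Corollary~\ref{cor:rank2 circuits} or Proposition~\ref{prop:Ce1}, and no residual sub-cases are needed.
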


\begin{proof}
Let $M$ be a matroid with a size-$k$ circuit-cocircuit intersection
$X$. Then, by Proposition \ref{prop:Oxley}(\ref{enu:Oxley2-2}),
there is a minor $N$ of $M$ whose rank and dual rank are $k-1$
in which $X$ is both a circuit and a cocircuit. So, we can use Lemma
\ref{lem:Ce0}. We prove that $N$ has a circuit-cocircuit intersection
of size $k-2$, then by Proposition \ref{prop:Oxley}(\ref{enu:Oxley1}),
$M$ also has a circuit-cocircuit intersection of size $k-2$ and
we are done. Moreover, it suffices to prove for $k=4,5$ by Proposition
\ref{prop:Oxley}(\ref{enu:Oxley2-1}). For convenience, we write
\[
Y=E(N)-X.
\]

Let $k=4$. Write 
\[
Y=\left\{ y_{1},y_{2}\right\} \quad\text{and}\quad X=\left\{ x_{1},x_{2},x_{3},x_{4}\right\} .
\]
 Since $Y$ is a rank-$2$ flat of size $2$, the union $Y\cup\left\{ x_{1}\right\} $
is a rank-$3$ independent set. So, $Y\cup\left\{ x_{1},x_{2}\right\} $
is a dependent set since the rank of $N$ is $3$, and there is a
circuit $C$ contained in $Y\cup\left\{ x_{1},x_{2}\right\} $. Then,
$\left\{ x_{1},x_{2}\right\} \subset C$ since $\left|C\right|\ge3$
and $\left|Y\cup\left\{ x_{1},x_{2}\right\} \right|=4$. Thus, $C\cap X=\left\{ x_{1},x_{2}\right\} $
which is a circuit-cocircuit intersection of size $2$.\smallskip{}

Let $k=5$. Write 
\[
Y=\left\{ y_{1},y_{2},y_{3}\right\} \quad\text{and}\quad X=\left\{ x_{1},x_{2},x_{3},x_{4},x_{5}\right\} .
\]
 If there is a circuit or a cocircuit $C$ of size $4$ with $\left|C\cap Y\right|=1$,
then $C\cap X$ is a circuit-cocircuit intersection of size $3$,
and we are done. Therefore, we assume that all circuits and cocircuits
$C$ with $\left|C\cap Y\right|=1$ have size $3,5$.\smallskip{}

If $C$ is a (co)circuit of size $3$ with $\left|C\cap Y\right|=1$,
there is a (co)circuit $C'\neq C$ with $C'\cap Y=C\cap Y$ by Lemma
\ref{lem:Ce0}(\ref{enu:Ce0-1}). Then, $X\subset C'\cup C$ by Lemma
\ref{lem:Ce0}(\ref{enu:Ce0-2}), and $X-C\subset C'$. Since $\left|X-C\right|=3$,
we have $\left|C'\right|\ge4$ and $\left|C'\right|=5$ by assumption.
Hence, for any $y\in Y$, there always exists a size-$5$ (co)circuit
$C'$ with $C'\cap Y=\left\{ y\right\} $.\smallskip{}

Let $D$ be a cocircuit of size $5$ with $D\cap Y=\left\{ y_{1}\right\} $,
then $\left|D\cap X\right|=4$ and without loss of generality write
\[
D\cap X=X-\left\{ x_{1}\right\} .
\]
 We show below that there is a circuit $C\in\mathcal{C}_{y_{2}}$
of size $5$ containing $x_{1}$, then $C\cap D$ is a circuit-cocircuit
intersection of size $3$, and the proof will be done.\smallskip{}

Indeed, there is a circuit $C_{1}\in\mathcal{C}_{y_{2}}$ containing
$x_{1}$. If $C_{1}$ has size $5$, we are done. If $C_{1}$ has
size $3$, without loss of generality write 
\[
C_{1}=\left\{ y_{2},x_{1},x_{2}\right\} .
\]
 By (\ref{enu:Ce0-1}) and (\ref{enu:Ce0-2}) of Lemma \ref{lem:Ce0},
there is a circuit $C_{2}\in\mathcal{C}_{y_{2}}$ with 
\[
C_{1}\cup C_{2}=X\cup\left\{ y_{2}\right\} 
\]
 and so $\left|C_{2}\right|\ge4$. Then, $C_{2}$ has size $5$ and
contains either $x_{1}$ or $x_{2}$. If $C_{2}$ contains $x_{1}$,
we are done. If $C_{2}$ contains $x_{2}$, we have 
\[
C_{1}\cap C_{2}=\left\{ y_{2},x_{2}\right\} .
\]
 Then, by (\ref{enu:Ce0-1}) and (\ref{enu:Ce0-4}) of Lemma \ref{lem:Ce0},
there is a circuit $C_{3}\in\mathcal{C}_{y_{2}}-\left\{ C_{1},C_{2}\right\} $,
and $C_{3}-\left\{ y_{2}\right\} $ contains $X-C_{1}\cap C_{2}=X-\left\{ x_{2}\right\} $
by Lemma \ref{lem:Ce0}(\ref{enu:Ce0-3}). Then, $C_{3}$ contains
$X\cup\left\{ y_{2}\right\} -\left\{ x_{2}\right\} $ which has size
$5$, and we have 
\[
C_{3}=X\cup\left\{ y_{2}\right\} -\left\{ x_{2}\right\} 
\]
 which is a size-$5$ circuit containing $x_{1}$. The proof is complete.
\end{proof}

\section{\label{sec:Provision}Towards the Conjecture for Intersection Size
$k\ge7$}

The computation for intersection size $k\ge7$ is very complicated.
But, we expect the following two propositions to play an important
role in solving the conjecture for $k\ge7$. Proposition \ref{prop:Ce1}
investigates the interaction between circuits of $\mathcal{C}_{y}$
and $\mathcal{C}_{y'}$, and Corollary \ref{cor:rank2 circuits} particularly
deals with rank-$2$ circuits of $N$.\smallskip{}

For two sets $A$ and $B$ we denote by $A\,\triangle\,B$ their symmetric
difference 
\[
A\,\triangle\,B=\left(A-B\right)\cup\left(B-A\right)=A\cup B-A\cap B.
\]

\begin{prop}
\label{prop:Ce1}Fix an integer $k\ge4$. Let $N$ be a matroid whose
rank and dual rank are $k-1$ with a size-$k$ circuit $X$ that is
also a cocircuit. Let $C$ and $C'$ be two circuits satisfying that
$\left|C-X\right|=\left|C'-X\right|=1$ and $C\cap C'\neq\emptyset$.
\begin{enumerate}[itemsep=2pt]
\item \label{enu:Ce1-1}If $X\nsubseteq C\cup C'$, then either
\begin{enumerate}[topsep=1pt]
\item $C\cap X$ or $C'\cap X$ contains the other, or
\item there is a circuit $C''$ with $C\,\triangle\,C'\subseteq C''\subset C\cup C'$.
\end{enumerate}
\item \label{enu:Ce1-2}If $C\cap X$ strictly contains $C'\cap X$, then
$C-X\neq C'-X$ and there is a circuit $C''$ satisfying that $C-C'\subseteq C''\subset C\cup C'$
and $C''-X=C\cup C'-X$.
\end{enumerate}
\end{prop}

\begin{proof}
By Lemma \ref{lem:Ce0}, we know that $N$ is a simple matroid with
$\left|E(N)\right|=2k-2$, $N|_{X}\simeq U_{k-1,k}$, and $N|_{E(N)-X}\simeq U_{k-2,k-2}$
where $E(N)-X$ is a maximal proper flat of size $k-2$ which is also
an independent set. By assumption, $C-X=\left\{ y\right\} $ and $C'-X=\left\{ y'\right\} $
for some $y,y'\in E(N)-X$ so that $C\in\mathcal{C}_{y}$ and $C'\in\mathcal{C}_{y'}$.\smallskip{}

To prove (\ref{enu:Ce1-1}) by contrapositive, suppose that $\left(C-C'\right)\cap X\neq\emptyset$,
$\left(C'-C\right)\cap X\neq\emptyset$, and there are no circuits
$C''$ with $C\,\triangle\,C'\subseteq C''\subset C\cup C'$, i.e.
if $C''$ is a circuit contained in $C\cup C'$, then either $\left(C-C'\right)-C''\neq\emptyset$
or $\left(C'-C\right)-C''\neq\emptyset$.

If $y=y'$, then $C,C'\in\mathcal{C}_{y}$ are distinct circuits,
and $X\subset C\cup C'$ by Lemma \ref{lem:Ce0}(\ref{enu:Ce0-2}).

If $y\neq y'$, then $\emptyset\neq C\cap C'\subset X$ and for some
$x_{0}\in C\cap C'$ and $x_{1}\in\left(C-C'\right)\cap X\neq\emptyset$,
there is a circuit $C''$ with $x_{1}\in C''\subseteq C\cup C'-\left\{ x_{0}\right\} $.
Then, $\left\{ y,y'\right\} \cap C''\neq\emptyset$ since otherwise
$C''\subseteq C\cup C'-\left\{ y,y'\right\} \subseteq X$, a contradiction,
and we have $3$ cases: 
\[
y\notin C''\text{ or }y'\notin C''\text{ or }\left\{ y,y'\right\} \subset C''.
\]

If $y\notin C''$, then $C''\in\mathcal{C}_{y'}$, and $X\subset C'\cup C''$
by Lemma \ref{lem:Ce0}(\ref{enu:Ce0-2}), and so $X\subset C\cup C'$.

If $y'\notin C''$, similarly $X\subset C\cup C'$.

If $\left\{ y,y'\right\} \subset C''$, suppose that $\left(C-C'\right)-C''\neq\emptyset$,
then for $x_{1}\in\left(C-C'\right)-C''$ there is a circuit $C_{1}\in\mathcal{C}_{y'}$
with $x_{1}\in C_{1}\subseteq C\cup C'-\left\{ y\right\} $. Then,
$X\subset C_{1}\cup C'$ by Lemma \ref{lem:Ce0}(\ref{enu:Ce0-2})
and $X\subset C\cup C'$. Suppose that $\left(C'-C\right)-C''\neq\emptyset$,
then similarly $X\subset C\cup C'$. Thus, (\ref{enu:Ce1-1}) is proved.\smallskip{}

To prove (\ref{enu:Ce1-2}), suppose that $C'\cap X\subsetneq C\cap X$,
i.e. $\left(C-C'\right)\cap X\neq\emptyset$. Then, $y\neq y'$ since
otherwise $C'\subsetneq C$, and $\left\{ y,y'\right\} =C\cup C'-X$.
For any $x_{2}\in\left(C-C'\right)\cap X$ and $x_{3}\in C'\cap X$,
there is a circuit $C''$ with $x_{2}\in C''\subset C\cup C'-\left\{ x_{3}\right\} $.
Then, $\left\{ y,y'\right\} \cap C''$ is nonempty since otherwise
$C''\subseteq C\cup C'-\left\{ y,y'\right\} \subseteq X$, and we
again have $3$ cases: 
\[
y\notin C''\text{ or }y'\notin C''\text{ or }\left\{ y,y'\right\} \subset C''.
\]

If $y\notin C''$, then $C''\in\mathcal{C}_{y'}$ and $\left(C''\cup C'\right)\cap X=X$
by Lemma \ref{lem:Ce0}(\ref{enu:Ce0-2}), but then $X=\left(C''\cup C'\right)\cap X\subseteq C\cap X\subsetneq X$,
a contradiction.

If $y'\notin C''$, then $y\in C''$ and $C''=\left\{ y\right\} \cup\left(C''\cap X\right)\subset C$,
a contradiction.

Therefore, we have $\left\{ y,y'\right\} \subset C''$ and $C''-X=\left\{ y,y'\right\} =C\cup C'-X$.

Further, since $y'\in C'\cap C''$ and $y\in C''-C'$, there is a
circuit $C_{2}$ satisfying that $y\in C_{2}\subseteq C'\cup C''-\left\{ y'\right\} $,
then $C_{2}\in\mathcal{C}_{y}$ and actually $C_{2}=C$ since $C'\cup C''-\left\{ y'\right\} \subseteq C$.
Therefore $C-C'=C_{2}-C'\subseteq C''$, and (\ref{enu:Ce1-2}) is
proved. The proof is complete.
\end{proof}
\begin{cor}
\label{cor:rank2 circuits}Every rank-$2$ circuit of $N$ is a circuit
of $\mathcal{C}_{y}$ for some $y\in E(N)-X$, and is also a flat.
If $k\ge5$, for any two rank-$2$ circuits $C$ and $C'$, either 
\begin{enumerate}
\item $C\cap C'=\emptyset$, or
\item $C\cap C'$ is a singleton contained in $X$ and $C\,\triangle\,C'$
is a circuit of size $4$.
\end{enumerate}
\end{cor}

\begin{proof}
Let $C$ be a rank-$2$ circuit, then $C\nsubseteq X$ since $X$
is a circuit of size $k\ge4$, and $C-X\neq\emptyset$. Since $E(N)-X$
is an independent set, $C\nsubseteq E(N)-X$ and $C\cap X\neq\emptyset$.
Since $C\cap X$ is a circuit-cocircuit intersection, we have $\left|C\cap X\right|\ge2$.
Because $C$ is a rank-$2$ circuit, $\left|C\cap X\right|=2$ and
$\left|C-X\right|=1$, and let $C-X=\left\{ y\right\} $, then $C\in\mathcal{C}_{y}$.
Moreover, $C$ is a rank-$2$ flat. Indeed, for all $y'\in E(N)-C\cup X=E(N)-X\cup\left\{ y\right\} $,
we have $r(C\cup\left\{ y'\right\} )=3$ since otherwise $\left\{ y,y'\right\} =\left(C-X\right)\cup\left\{ y'\right\} \subset E(N)-X$
would be a rank-$2$ flat and contain $C$. Also, for all $x\in X-C$,
we have $r(C\cup\left\{ x\right\} )=3$ since $\left(C\cap X\right)\cup\left\{ x\right\} \subset X$
is a size-$3$ independent set. This proves that $C$ is a flat.\smallskip{}

Suppose $k\ge5$. Let $C$ and $C'$ be two  rank-$2$ circuits, which
are also rank-$2$ flats, then we have $\left|C\cap X\right|=\left|C'\cap X\right|=2$
and $\left|C-X\right|=\left|C'-X\right|=1$, and moreover $C\cap C'$
is a flat of rank $\le1$. Because $N$ is simple, either $C\cap C'=\emptyset$
or $\left|C\cap C'\right|=1$. Suppose $C\cap C'\neq\emptyset$, then
$\left|C\cap C'\right|=1$. Since $\left|\left(C\cup C'\right)\cap X\right|\le4<5\le k=\left|X\right|$,
we have $X\nsubseteq C\cup C'$ and $C-X\neq C'-X$ by Lemma \ref{lem:Ce0}(\ref{enu:Ce0-2}).
In particular, $C\cap C'$ is a singleton contained in $X$. Note
that $C\cap X\nsubseteq C'\cap X$ and $C\cap X\nsupseteq C'\cap X$.
Then, by Proposition \ref{prop:Ce1}(\ref{enu:Ce1-1}), there is a
circuit $C''$ satisfying that $C\,\triangle\,C'\subseteq C''\subset C\cup C'$.
But, $4=\left|C\,\triangle\,C'\right|\le\left|C''\right|\le\left|C\cup C'\right|-1=4$,
and thus $C''=C\,\triangle\,C'$ which is a circuit of size $4$.
\end{proof}

\subsection*{Acknowledgements}

The author would like to thank James Oxley for encouraging him to
publish this result. He is also grateful to JongHae Keum for supporting
him by the grant (NRF-2019R1A2C3010487) of the National Research Foundation
funded by the Korean government.

\end{document}